\theoremstyle{plain}
\newtheorem{thm}{Theorem}[section]
\theoremstyle{plain}
\newtheorem{lemma}[thm]{Lemma} 
\newtheorem{proposition}[thm]{Proposition}
\begin{document}
\title{ Some Remarks On Noncommutative Instantons } \par
\author{ Nikolay A. Ivanov }

\address{\hskip-\parindent
Nikolay A. Ivanov  \\
Department of Mathematics and Informatics\\
Veliko Tarnovo University \\}
\email{Nikolay.Antonov.Ivanov@gmail.com}

\begin{abstract}
We make some comments on noncommutative $U(N)$-instantons on $\mathbb{R}^4_{\theta}$. We elaborate on the equations 
for the ASD-connection for free modules. Further we make some remarks on the computation of the topological index of 
ADHM instantons. 
\end{abstract}

\keywords{Noncommutative $\mathbb{R}^4$; ADHM-construction; $U(N)$-connection, topological index.}

\maketitle

\section{Introduction} \label{sec1}

The idea of noncommutative field theories is old (see Snyder \cite{Snyder47}). Much later there had been 
significant interest motivated by Operator algebra theory (from Mathematics side) and by String theory 
(from Physics side). Connes and Rieffel \cite{CR87, C94} studied the space of minima of a version of 
Yang-Mills functional 
on noncommutative two-tori. This YM-functional is defined by considering a connection on a projective module, 
taking the corresponding curvature two-form, collapsing it into itself and taking trace. 
The usual Yang-Mills functional is constructed by integrating the wedge product of a curvature forms with itself over 
the underlying four dimensional space. Connes, Douglas and Schwarz \cite{CDS98} considered Matrix theory, 
compactified on tori; Seiberg and Witten \cite{SW99} considered a B-field in the presence of a $Dp$-brane 
(which prevents the B-field to be gauged out). Noncommutative field theory has become a separate theory, 
having its mathematical \cite{C80, C94} and physical \cite{DN01, S03} inclinations. 
\par
Conventional instantons are topological field configurations of gauge fields. They have been introduced by 
Belavin, Polyakov, Schwarz and Tyupkin \cite{BPST75} and studied extensively further. 
Noncommutative instantons are an analog pertaining to the Noncommutative field theory. 
They have been introduced by Nekrasov and Schwatz \cite{NS98} and developed further \cite{N00, S01, KS01, S02, TZS02}. 
There are more recent discussions on the topic. \cite{S10, HN13}

\section{Noncommutative Spaces, Algebras of Operators and the Fock Hilbert-Space} \label{sec2}

Consider the space $\mathbb{R}_{\theta}^4$ with four coordinate operators 
$\hat{x}_1, \hat{x}_2, \hat{x}_3, \hat{x}_4$ satisfying the relations $[\hat{x}_i,\hat{x}_j] = i\theta_{ij},$ 
where $\theta_{ij}$ is a real nondegenerate antisymmetric matrix. After $\rm{SO(4)}$ rotation of coordinates, 
$\theta$ can be brought to the form:
$$\theta_{ij} =
\left(\!\!\begin{array}{cccc}0 & \theta_{12} & 0 & 0 \\ -\theta_{12} & 0 & 0 & 0 \\
0 & 0 & 0 & \theta_{34} \\ 0 & 0 & -\theta_{34} & 0 \end{array}\!\!\right),$$
where $\theta_{12} > 0,\ \theta_{12} + \theta_{34} \geq 0.$ 
\par
A theory built up by such operators is nonlocal: $\sigma_{\hat{x}_1} \cdot \sigma_{\hat{x}_2} \geq 
\frac{1}{2} | \theta_{12} |$; $\sigma_{\hat{x}_3} \cdot \sigma_{\hat{x}_4} \geq \frac{1}{2} |\theta_{34} |.$ Here 
$\sigma_{\hat{x}_i} = \sqrt{\langle \hat{x}_i^2 \rangle - \langle \hat{x}_i \rangle^2}.$
\par
Since the pairs of operators $(\hat{x}_1, \hat{x}_2)$ and $(\hat{x}_3, \hat{x}_4)$ satisfy the canonical commutation 
relations, by the Stone-von Neumann theorem they can be analytically represented as the position and momentum 
operators on ${\rm L}^2(\mathbb{R}^2):$ 
\begin{equation*} 
\hat{x}_1 \psi(x_1,x_3) = x_1 \psi(x_1,x_3),\ \  \hat{x}_2 \psi(x_1,x_3) = -i \theta_{12} \frac{d \psi(x_1,x_3)}{dx_1} 
\end{equation*}
\begin{equation} \label{canonical}
\hat{x}_3 \psi(x_1,x_3) = x_3 \psi(x_1,x_3),\ \  \hat{x}_4 \psi(x_1,x_3) = -i \theta_{34} \frac{d \psi(x_1,x_3)}{dx_3}.
\end{equation}

One can consider appropriate algebras $\mathcal{A}$ on $\mathbb{R}_{\theta}^4$. To start, consider these examples
\cite{S01, KS01}: \\
$\bullet$ ${\displaystyle \mathcal{S}(\mathbb{R}^4) = \{ f \in C^{\infty}(\mathbb{R}^4) \ | \ 
\underset{x \in \mathbb{R}^4}{\rm{sup}} | x^{\alpha} D^{\beta} f(x) | < \infty,\ 
\forall \alpha, \beta \in \mathbb{N}_0^4 \}}$; \\
$\bullet$ ${\displaystyle \Gamma^m_{\rho}(\mathbb{R}^4) = \{ f \in C^{\infty}(\mathbb{R}^4) \ | \ 
| \partial_{\alpha} f(x) | \leq C_{\alpha} (1+ |x|^2)^{\frac{1}{2}(m -\rho | \alpha |)},\ 
\forall \alpha \in \mathbb{N}_0^4 \} }$, \\
where $m < 0,\ 0< \rho \leq 1,\ {\rm and}\ | \alpha | = \alpha_1 + \alpha_2 + \alpha_3 + \alpha_4;$ \\
$\bullet$ ${\displaystyle \mathcal{K}^{\infty}(\mathbb{R}^4) = \{ f \in C^{\infty}(\mathbb{R}^4) \ | \ 
\underset{|x| \to \infty}{\rm lim}| \partial_{\alpha} f(x) | = 0,\ \forall \alpha \in \mathbb{N}_0^4 \} }$. \\
Next we use the Weyl formula which assigns to a function on $\mathbb{R}^4$ an operator \cite{S03} on 
$\mathbb{R}^4_{\theta}$:  \\
${\displaystyle f(x) \mapsto \hat{\mathcal{W}}[f(x)] = \int \ d^4 x \ f(x) \hat{\Delta}(x),\ \hat{\Delta}(x) = 
\frac{1}{(2\pi)^4} \int \ d^4 k \ {\rm e}^{ik^j \hat{x}_j} {\rm e}^{-ik^j x_j}}$. 
We have $\hat{\mathcal{W}}[{\rm e}^{ik^j x_j}] = {\rm e}^{ik^j \hat{x}_j}$ and the reverse formula is: 
\par
${\displaystyle {\rm Tr}(\hat{\mathcal{W}}[f] \hat{\Delta}(x)) = f(x)}$. \\
Denote: ${\displaystyle \mathcal{S} = \hat{\mathcal{W}}[\mathcal{S}(\mathbb{R}^4)],\ 
\Gamma^m_{\rho} = \hat{\mathcal{W}}[\Gamma^m_{\rho}(\mathbb{R}^4)],\ 
\mathcal{K}^{\infty} = \hat{\mathcal{W}}[\mathcal{K}^{\infty}(\mathbb{R}^4)].}$
\par
We note that:
\begin{lemma}
$\mathcal{S}$, $\Gamma^m_{\rho}$ ($m<0,\ 0 < \rho \leq 1$) and $\mathcal{K}^{\infty}$ are norm-dense subalgebras of 
the algebra of compact operators $\mathcal{K}$ on $L^2(\mathbb{R}^2)$.
\end{lemma}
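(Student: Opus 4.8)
The plan is to reduce everything to the standard Weyl (Shubin) pseudodifferential calculus on $L^2(\mathbb{R}^2)$. After the invertible linear substitution $x_2=\theta_{12}\xi_1$, $x_4=\theta_{34}\xi_3$ (with $\theta_{34}\neq 0$ by nondegeneracy), the operator $\hat{\mathcal{W}}[f]$ acting via \eqref{canonical} becomes the usual Weyl quantization $\mathrm{Op}^w(a)$ on $L^2(\mathbb{R}^2)$ of the symbol $a(x_1,\xi_1,x_3,\xi_3)=f(x_1,\theta_{12}\xi_1,x_3,\theta_{34}\xi_3)$. An invertible linear change of variables maps each of $\mathcal{S}(\mathbb{R}^4)$, $\mathcal{K}^\infty(\mathbb{R}^4)$, and $\Gamma^m_\rho(\mathbb{R}^4)$ onto itself, so it is enough to treat $\hat{\mathcal{W}}=\mathrm{Op}^w$. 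With this normalization $\Gamma^m_\rho(\mathbb{R}^4)$ is exactly the isotropic Shubin symbol class on $T^*\mathbb{R}^2$, and $\mathcal{K}^\infty(\mathbb{R}^4)$ sits inside the bounded-symbol class $S^0_{0,0}$ (a continuous function vanishing at infinity is bounded). Below $z\in\mathbb{R}^4$ denotes the phase-space variable and $\star$ the Moyal product, so $\mathrm{Op}^w(f)\,\mathrm{Op}^w(g)=\mathrm{Op}^w(f\star g)$.

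First I would verify that the three classes are closed under $\star$. For $\mathcal{S}(\mathbb{R}^4)$ this is classical: the Moyal product of Schwartz functions is Schwartz. For $\Gamma^m_\rho$ the Shubin composition rule gives $\Gamma^m_\rho\star\Gamma^m_\rho\subseteq\Gamma^{2m}_\rho$, and since $m<0$ one has $(1+|z|^2)^{(2m-\rho|\alpha|)/2}\leq(1+|z|^2)^{(m-\rho|\alpha|)/2}$, whence $\Gamma^{2m}_\rho\subseteq\Gamma^m_\rho$. For $\mathcal{K}^\infty$ one uses that $S^0_{0,0}$ is stable under $\star$, together with the Leibniz identity $\partial_\gamma(f\star g)=\sum_{\beta\leq\gamma}\binom{\gamma}{\beta}(\partial_\beta f)\star(\partial_{\gamma-\beta}g)$ and the fact that $f\star g$ vanishes at infinity as soon as $f$ does and $g\in S^0_{0,0}$ (write $f\star g$ as an oscillatory integral and dominate, using boundedness of $g$ and of its derivatives). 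Since $\mathrm{Op}^w$ is injective on symbols, each class is then realized faithfully as a $*$-subalgebra of $B(L^2(\mathbb{R}^2))$.

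Next I would show that each class consists of compact operators. If $a\in\mathcal{S}(\mathbb{R}^4)$ the Schwartz kernel of $\mathrm{Op}^w(a)$ belongs to $\mathcal{S}(\mathbb{R}^2\times\mathbb{R}^2)$, so $\mathrm{Op}^w(a)$ is Hilbert--Schmidt, hence compact. For $a\in\Gamma^m_\rho$ with $m<0$, or $a\in\mathcal{K}^\infty(\mathbb{R}^4)$, I would argue by cutoff: fix $\chi\in C_c^\infty(\mathbb{R}^4)$ with $\chi\equiv1$ near $0$ and set $\chi_R(z)=\chi(z/R)$. Then $\chi_R a\in C_c^\infty(\mathbb{R}^4)\subseteq\mathcal{S}(\mathbb{R}^4)$, so $\mathrm{Op}^w(\chi_R a)$ is compact; moreover for each multi-index $\gamma$ one has $\sup_z|\partial_\gamma((1-\chi_R)a)(z)|\to0$ as $R\to\infty$ --- for $\mathcal{K}^\infty$ this is the defining decay of $a$ and all its derivatives, while for $\Gamma^m_\rho$ it follows from $(1+|z|^2)^{(m-\rho|\gamma|)/2}\leq(1+R^2/4)^{m/2}\to0$ on $\operatorname{supp}(1-\chi_R)$ because $m<0$, after a Leibniz expansion to absorb the derivatives of $\chi_R$. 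The Calder\'{o}n--Vaillancourt theorem then yields $\|\mathrm{Op}^w((1-\chi_R)a)\|\to0$, so $\mathrm{Op}^w(a)=\lim_{R\to\infty}\mathrm{Op}^w(\chi_R a)$ is a norm limit of compact operators and hence compact.

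Finally, for density I would place all finite-rank operators inside each class. Let $\{h_k\}_{k\geq0}$ be the Hermite basis of $L^2(\mathbb{R}^2)$; each $h_k\in\mathcal{S}(\mathbb{R}^2)$, and the Weyl symbol of the rank-one operator $|h_j\rangle\langle h_k|$ is, up to a fixed constant, the cross-Wigner function $W(h_j,h_k)$, which lies in $\mathcal{S}(\mathbb{R}^4)$ since the cross-Wigner map sends $\mathcal{S}(\mathbb{R}^2)\times\mathcal{S}(\mathbb{R}^2)$ into $\mathcal{S}(\mathbb{R}^4)$. Because a Schwartz function decays faster than any polynomial weight and has all derivatives vanishing at infinity, $\mathcal{S}(\mathbb{R}^4)\subseteq\Gamma^m_\rho(\mathbb{R}^4)\cap\mathcal{K}^\infty(\mathbb{R}^4)$; hence every $|h_j\rangle\langle h_k|$, and every finite linear combination of such operators, lies in all three of the operator algebras $\mathcal{S}$, $\Gamma^m_\rho$, $\mathcal{K}^\infty$. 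Those finite linear combinations are norm-dense in $\mathcal{K}$, and by the preceding step each of the three algebras is contained in $\mathcal{K}$; therefore each is a norm-dense $*$-subalgebra of $\mathcal{K}$. The real work is concentrated in the operator-norm estimates --- the Calder\'{o}n--Vaillancourt bound for $S^0_{0,0}$-type symbols and the oscillatory-integral argument for stability of $\mathcal{K}^\infty$ under $\star$; the remaining steps are routine within the Shubin calculus.
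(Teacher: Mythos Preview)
Your proof is correct and more complete than the paper's. The paper takes the algebra structure and the inclusion $\mathcal{S}\subseteq\Gamma^m_\rho\cap\mathcal{K}^\infty$ as known from the cited literature, together with the density of $\mathcal{S}$ in $\mathcal{K}$; it then concentrates solely on the remaining point, namely that each class lands inside the compacts. For this the paper computes the integral kernel of $\hat{\mathcal{W}}[f]$ explicitly via Baker--Campbell--Hausdorff and argues, rather informally, that the oscillatory phase together with the decay of $f$ localizes the kernel near the diagonal, so that a ``compactness argument'' extracts a Cauchy subsequence from the image of any bounded sequence. Your route through the Shubin calculus is more systematic: the cutoff-plus-Calder\'on--Vaillancourt step replaces the paper's direct kernel heuristics with a clean norm approximation by Hilbert--Schmidt operators, and your density argument via cross-Wigner symbols of Hermite rank-one operators makes explicit what the paper simply quotes. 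The trade-off is that you import heavier machinery (Calder\'on--Vaillancourt, Shubin composition rules, the oscillatory-integral closure of $\mathcal{K}^\infty$ under~$\star$), whereas the paper's argument aims to be self-contained but is decidedly less rigorous at its final step.
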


\begin{proof}
It is well known that $\mathcal{S}$ is a norm dense subalgebra of $\mathcal{K}$ and it is known that all these sets 
are algebras, containing $\mathcal{S}$. \cite{S01, KS01}. It is enough to show that they are all subalgebras of 
$\mathcal{K}$.
\par
According to the Baker-Campbell-Hausdorff Formula we have \cite{H13} :
\begin{equation*}
[{\rm e}^{ik^j \hat{x}_j} \psi](x_1,x_3) = 
{\rm e}^{i(k^1 k^2 \theta_{12} + k^3 k^4 \theta_{34})/2} 
{\rm e}^{i(k^1 x_1 + k^3 x_3)} \psi(x_1 + \theta_{12} k^2, x_3 + \theta_{34} k^4).
\end{equation*}
Suppose that $f(x_1,x_3) \in C_0(\mathbb{R}^2)$ is a function from one of the above algebras (in particular it 
goes to zero at infinity). Let $\{ \psi_n(x_1,x_3) \}_{n=1}^{\infty}$ be a sequence from the unit ball of 
$L^2(\mathbb{R}^2)$. We have:
\begin{equation*}
\hat{\mathcal{W}}[f] \psi_n(x_1,x_3) = \int \ d^4 y \ f(y) \frac{1}{(2\pi)^4} \int \ d^4 k \ {\rm e}^{ik^j \hat{x}_j} 
{\rm e}^{-ik^j y_j} \psi_n(x_1,x_3) = 
\end{equation*}
\begin{equation*}
\frac{1}{(2\pi)^4} \int \int d^4 y d^4 k f(y) {\rm e}^{-ik^j y_j} {\rm e}^{i(k^1 k^2 \theta_{12} + 
k^3 k^4 \theta_{34})/2} {\rm e}^{i(k^1 x_1 + k^3 x_3)} .
\end{equation*}
\begin{equation*}
. \psi_n(x_1 + \theta_{12} k^2, x_3 + \theta_{34} k^4) = 
\end{equation*}
\begin{equation*}
\frac{1}{(2\pi)^4} \int \int d^4 y dk^2 dk^4 f(y) \delta (k^2 \theta_{12} + x_1 - y_1) 
\delta (k^4 \theta_{34} + x_3 - y_3) . 
\end{equation*}
\begin{equation*}
. {\rm e}^{-i(k^2 y_2 + k^4 y_4)} \psi_n(x_1 + \theta_{12} k^2, x_3 + \theta_{34} k^4) = 
\end{equation*}
\begin{equation*}
\frac{1}{(2\pi)^4} \int \int dy_2 dy_4 dk^2 dk^4 f(x_1 + \theta_{12} k^2, y_2, x_3 + \theta_{34} k^4, y_4) .
\end{equation*}
\begin{equation*}
. {\rm e}^{-i(k^2 y_2 + k^4 y_4)} \psi_n(x_1 + \theta_{12} k^2, x_3 + \theta_{34} k^4) = 
\end{equation*}
\begin{equation*}
\frac{1}{(2\pi)^4 \theta_{12} \theta_{34}} \int \int d^4 y 
{\rm e}^{-i[y_2(y_1 -x_1)/\theta_{12}  + y_4(y_3 -x_3)/\theta_{34}]} 
f(y_1, y_2, y_3, y_4) \psi_n(y_1, y_3).
\end{equation*}
For $y_1 - x_1$ and $y_3 - x_3$ both away from zero there is dampening coming from $f$ and the rapidly oscillating 
phase. Thus the volume is concentrated around the origin. Compactness argument shows that we can find a fundamental 
subsequence, hence the operator is compact.
\end{proof}

To an algebra $\mathcal{A}$ of the above form we can adjoint a unit. The algebra we obtain in this way we will denote 
by $\tilde{\mathcal{A}}$. It follows from the above lemma that the $K$-theories of all these algebras coinside: If 
$\mathcal{A} = \mathcal{K}^{\infty}, \ \Gamma^m_{\rho}, \ {\rm or} \ \mathcal{S}$, then
\begin{equation*}
K_0(\mathcal{A}) = K_0(\mathcal{K}) = \mathbb{Z}{\rm ,} \ \ K_1(\mathcal{A}) = K_1(\mathcal{K}) = \{ 0 \}{\rm ,} 
\end{equation*}
\begin{equation} \label{K}
K_0(\tilde{\mathcal{A}}) = K_0(\tilde{\mathcal{K}}) = \mathbb{Z}^2 \ {\rm and} \  
K_1(\tilde{\mathcal{A}}) = K_1(\tilde{\mathcal{K}}) = \{ 0 \}.
\end{equation}
$\mathcal{A}$ is closed with respect to taking derivatives, which are defined in the following way:
\begin{equation*}
\partial_1(\hat{f}) \equiv \frac{i}{\theta_{12}} [\hat{x}_2,\hat{f}], \  \  \ 
\partial_2(\hat{f}) \equiv \frac{-i}{\theta_{12}} \ [\hat{x}_1,\hat{f}],
\end{equation*}
\begin{equation*} 
\partial_3(\hat{f}) \equiv \frac{i}{\theta_{34}} [\hat{x}_4,\hat{f}], \  \  \ 
\partial_4(\hat{f}) \equiv \frac{-i}{\theta_{34}} [\hat{x}_3,\hat{f}].
\end{equation*}
Note that these are all inner derivations and we have $\partial_1(\hat{x}_1^n) = n \hat{x}_1^{n-1}$, etc.
\par
The Lie algebra $\mathbb{R}^4$ acts on $\mathcal{A}$ via the derivatives. 
The derivatives commute, due to the Bianchi identity. 
\par
We have two commuting sets of operators, satisfying the canonical commutation relations:
\begin{equation*}
c_1 \equiv \frac{1}{\sqrt{2\theta_{12}}} \hat{z_1} = \frac{1}{\sqrt{2\theta_{12}}} (\hat{x_1} + i\hat{x_2}),\ \ 
c_1^* \equiv \frac{1}{\sqrt{2\theta_{12}}} \hat{z_1}^* = \frac{1}{\sqrt{2\theta_{12}}} (\hat{x_1} - i\hat{x_2}), 
\end{equation*}
\begin{equation*}
c_2 \equiv \frac{1}{\sqrt{2\theta_{34}}} \hat{z_2} = \frac{1}{\sqrt{2\theta_{34}}} (\hat{x_3} + i\hat{x_4}),\ \ 
c_2^* \equiv \frac{1}{\sqrt{2\theta_{34}}} \hat{z_2}^* = \frac{1}{\sqrt{2\theta_{34}}} (\hat{x_3} - i\hat{x_4}).
\end{equation*}
The commutation relations are:
\begin{equation*} 
[c_1,c_2] = [c_1,c_2^*] = [c_1^*,c_2] = [c_1^*,c_2^*] = 0,\ [c_1,c_1^*] = [c_2,c_2^*] = I.
\end{equation*}
The Hilbert space on which these operators are represented is $\mathcal{F} = \mathcal{F}_1 \otimes \mathcal{F}_2$ - 
the tensor product of two copies of the Fock space. There is a standard orthonormal basis of $\mathcal{F}$ given by 
the set $\{\ | m,n \rangle \ \ |\ m,n \in \mathbb{Z},\ m,n \geq 0\}.$ On vectors of this set $c_i$'s act as follows:
\begin{equation*}
c_1(| m,n \rangle ) = \sqrt{m+1}| m+1,n \rangle;\ \ c_2(| m,n \rangle ) = \sqrt{n+1}| m,n+1 \rangle;
\end{equation*}
\begin{equation*} 
c_1^*(| m,n \rangle ) = \sqrt{m}| m-1,n \rangle,\ {\rm if}\ m \geq 1;\ c_1^*(| 0,n \rangle ) = 0;
\end{equation*}
\begin{equation*} 
c_2^*(| m,n \rangle ) = \sqrt{n}| m,n-1 \rangle,\ {\rm if}\ n \geq 1;\ c_2^*(| m,0 \rangle ) = 0.
\end{equation*}
When one uses the representation of equations (\ref{canonical}), one takes 
$\mathcal{F}_1 \cong \mathcal{F}_2 \subset L^2(\mathbb{R})$ and thus 
$\mathcal{F} \subset L^2(\mathbb{R}^2)$. The vacuum state is given by 
${\displaystyle |0,0 \rangle = \frac{1}{2 \pi}{\rm e}^{-(x_1^2 + x_3^2)/2}}.$ 
The excited states of $c_i$'s are given by the Hermite polynomials of $x_1$ and $x_3$, multiplied by 
$\frac{1}{2 \pi}{\rm e}^{-(x_1^2 + x_3^2)/2}.$ In fact, it turns out that 
$\mathcal{F}_1 \cong \mathcal{F}_2 \cong \mathcal{S}(\mathbb{R})$ and $\mathcal{F} \cong \mathcal{S}(\mathbb{R}^2)$.
\par
Instead, we can take as creation operators
\begin{equation*} 
z_1 = x_1 + i x_2 \ \ {\rm  and} \ \ z_2 = x_3 + i x_4,
\end{equation*}
and as annihilation operators
\begin{equation*} 
\frac{\partial}{\partial z_1} \equiv \frac{1}{2}(\frac{\partial}{\partial x_1} - i\frac{\partial}{\partial x_2})
\ \ {\rm  and} \ \ \frac{\partial}{\partial z_2} \equiv 
\frac{1}{2}(\frac{\partial}{\partial x_3} - i\frac{\partial}{\partial x_4}).
\end{equation*}
These operators can be implemented on the Segal-Bargman space $\mathcal{H} L^2(\mathbb{C}^2, \mu_1)$ of all 
holomorphic functions on $\mathbb{C}^2$, $f$, for which $\| f \|_1 < \infty$. The scalar product (and the norm) is 
given by:
\begin{equation*}
\langle f, g \rangle_1 \equiv \frac{1}{\pi^2} \int_{\mathbb{C}^2} \bar{f} g {\rm e}^{-|z_1|^2 - |z_2|^2} dz_1 dz_2.
\end{equation*}
Then in the above notations one has ${\displaystyle |m,n \rangle \ \equiv \ \frac{z_1^m z_2^n}{\sqrt{m  ! n  ! }}}.$

\section{Vector Bundles, Connections and Curvatures} \label{sec3}

Let $F$ be a (principal or complex vector) bundle with structure group $G$ over a manifold $M$ and let 
$\pi : F \to M$ be the bundle projection. The bundle can be given by a set of local trivializations and 
transformations between them which are elements of $G$. 
The set of sections $\Gamma(M,F)$ is the set of all (continuous, differentiable, etc.) 
maps $\gamma : M \to F$, such that $\gamma (x) \in \pi^{-1}(x)$. It follows from the Serre-Swan theorem \cite{S62} that 
every vector bundle on a locally compact manifold is projective (i.e. a summand in a free bundle). It is important to 
note that $\Gamma(M,F)$ is a module over $C^{\infty}(M,\mathbb{C})$.
\par
In the noncommutative case one does not have 'points' because of nonlocality. Therefore in order to define 'bundles', 
one needs to consider (right) modules.
\par
Let \cite{C80} $A$ be a $C^*$-algebra and let $\Lambda$ be a Lie group, acting on $A$ via derivations 
(Lie$\Lambda$ is embedded in the algebra of derivations of $A$). 
Let $A^{\infty}$ be a dense $*$-subalgebra of smooth (with respect to Lie$\Lambda$) elements. 
To each right $A$-module $\Xi$ corresponds a unique (up to isomorphism) right $A^{\infty}$-module $\Xi^{\infty}$, 
for which $\Xi = \Xi^{\infty} \otimes_{A^{\infty}} A$. A right $A^{\infty}$-module $\Xi^{\infty}$ is projective if 
there exists a right $A^{\infty}$-module $\tilde{\Xi}^{\infty}$, such that 
$\Xi^{\infty} \oplus \tilde{\Xi}^{\infty} \cong \mathbb{C}^n \otimes A^{\infty}$ for some $n \in \mathbb{N}$.
\par
A connection on $\Xi^{\infty}$ is a $\mathbb{C}$-linear map 
$\nabla : \Xi^{\infty} \to \Xi^{\infty} \otimes ({\rm Lie}\Lambda)^*,$ satisfying the condition 
$\nabla_X(\xi \cdot x) = \nabla(\xi) \cdot x + \xi \cdot \delta_X(x),$ where 
$X \in {\rm Lie}\Lambda,$ $\delta_X$ is the corresponding element from the algebra of derivations, 
$x \in A^{\infty}$, and $\xi \in \Xi^{\infty}$. The curvature 
$\Theta \in {\rm End}_{A^{\infty}}(\Xi^{\infty}) \otimes \wedge^2({\rm Lie}\Lambda)^*$ of $\nabla$ is given by 
\begin{equation*} 
\Theta (X,Y) = \nabla_X \nabla_Y - \nabla_Y \nabla_X - \nabla_{[X,Y]} \in {\rm End}_{A^{\infty}}(\Xi^{\infty}),\ \ \ 
X,Y \in {\rm Lie}\Lambda.
\end{equation*}
\par
In the $\mathbb{R}^4_{\theta}$ case we have that $\Lambda = \mathbb{R}^4$ is commutative. If we denote 
$F_{jk} = \Theta(\partial_j, \partial_k)$ then we will have $F_{jk} = \nabla_j \nabla_k - \nabla_k \nabla_j$. 
Also in our case $A = \mathcal{A} = \mathcal{K}$ and $A^{\infty}$ is one of the algebras from section \ref{sec2}. 
Below we drop the $\infty$ signs for shortness.
\par
Every connection on $\mathbb{R}^4_{\theta}$ is of the form $\nabla_j = \partial_j + A_j$, where 
$A_j \in {\rm End}_{A}(\Xi).$ Its curvature is 
\begin{equation*} 
F_{mn} = \partial_m A_n - \partial_n A_m + [A_m, A_n].
\end{equation*}
Let's consider a connection of the form $P \partial_j$, where $P$ is the projection onto $\Xi$ in 
$\Xi \oplus \tilde{\Xi} \cong \mathbb{C}^n \otimes A$. In the commutative case \cite{A79} one can find 
local linear maps $u$ for which $P = u u^*$ and $u^* u = 1$. In the noncommutative case this is possible only if:

\begin{lemma} \label{free}
Assume that $U \in M_{n \times k}(\tilde{A})$ is such that $U U^* = P$ and $U^* U = 1_k$. Then the module 
$P M_{n \times n}(\tilde{A})$ is free.
\end{lemma}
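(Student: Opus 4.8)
The plan is to write down an explicit isomorphism of right $\tilde{A}$-modules from $P M_{n\times n}(\tilde{A})$ onto a manifestly free module, using $U$ and $U^*$ as the comparison maps; this is the direct noncommutative analogue of the classical factorization $P = uu^*,\ u^*u = 1$ referenced just above, the difference being that here $U$ is global rather than local.

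First I would fix conventions: regard $M_{n\times n}(\tilde{A})$ and $M_{k\times n}(\tilde{A})$ as right $\tilde{A}$-modules by entrywise right multiplication, so that $X \mapsto PX$ is an idempotent right-$\tilde{A}$-linear endomorphism of $M_{n\times n}(\tilde{A})$ and $P M_{n\times n}(\tilde{A})$ is its image, a (projective) submodule. Define $\varphi : P M_{n\times n}(\tilde{A}) \to M_{k\times n}(\tilde{A})$ by $\varphi(X) = U^* X$ and $\psi : M_{k\times n}(\tilde{A}) \to M_{n\times n}(\tilde{A})$ by $\psi(Y) = U Y$; both are right $\tilde{A}$-linear, since left multiplication by a fixed matrix over $\tilde{A}$ commutes with right multiplication by scalars from $\tilde{A}$. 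The key points to check are: (i) $\psi$ actually takes values in $P M_{n\times n}(\tilde{A})$, because $P(UY) = (UU^*)(UY) = U(U^*U)Y = UY$; (ii) $\varphi \circ \psi = \mathrm{id}$ on $M_{k\times n}(\tilde{A})$, using $U^*U = 1_k$; and (iii) $\psi \circ \varphi = \mathrm{id}$ on $P M_{n\times n}(\tilde{A})$, since for $X = PX$ one has $U(U^*X) = (UU^*)X = PX = X$. Hence $\varphi$ and $\psi$ are mutually inverse and $P M_{n\times n}(\tilde{A}) \cong M_{k\times n}(\tilde{A})$ as right $\tilde{A}$-modules.

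To finish, I would note that $M_{k\times n}(\tilde{A}) \cong \tilde{A}^{\,kn}$ as a right $\tilde{A}$-module via the obvious bijection between a matrix and the list of its entries, which is right-$\tilde{A}$-linear; this module is free, hence so is $P M_{n\times n}(\tilde{A})$ (of rank $kn$). There is no deep obstacle in the argument — it is essentially associativity of matrix multiplication over the ring $\tilde{A}$ — and the only points demanding a little care are tracking the correct module structure (right $\tilde{A}$-module, not right $M_n(\tilde{A})$-module, over which the statement would be false in general) and carrying everything out over the unitization $\tilde{A}$, since the projection $P$ need not lie in $M_n(A)$. The genuine content of the lemma, by contrast with the commutative case where a local $u$ always exists, is that such a $U$ need exist only when the module is free.
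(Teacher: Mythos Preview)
Your proof is correct and follows essentially the same approach as the paper: both use left multiplication by $U^*$ and by $U$ as mutually inverse right-$\tilde{A}$-linear maps to identify $P M_{n\times n}(\tilde{A})$ with a free module. Your version is in fact the cleaner of the two---the paper's displayed chain passes through $M_{k\times k}(\tilde{A})$ where $M_{k\times n}(\tilde{A})$ is intended, so your explicit rank $kn$ is the correct one.
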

\begin{proof}
If we denote by $u_1,\ u_2,\ \dots,\ u_k$ the columns of $U$, then we will have the following isomorphism 
$\iota : {\rm Span}_{\tilde{A}} \{ u_1, \dots, u_k \} \to \mathbb{C}^k \otimes \tilde{A}$, given by: 
\begin{equation*}
\iota : u_1 \cdot a_1 + \dots u_k \cdot a_k \mapsto (a_1,\dots,a_k).
\end{equation*}
The coefficients can be uniquely recovered by multiplying 
$u_1 \cdot a_1 + \dots u_k \cdot a_k = U (a_1, \dots, a_k)^T$ on the left by $U^*$. \\ 
Next we have 
\begin{equation*}
P M_{n \times n}(\tilde{A}) = U U^* M_{n \times n}(\tilde{A}) = 
U U^* U M_{k \times k}(\tilde{A}) = 
\end{equation*}
\begin{equation*}
= U 1_k M_{k \times k}(\tilde{A}) = U M_{k \times k}(\tilde{A}) = 
{\rm Span}_{\tilde{A}} \{ u_1, \dots, u_k \}^k,
\end{equation*}
the range projection of $U^*$ being $1_k$.
\end{proof}

In this case the connection is 
\begin{equation} \label{AU}
A = U^* dU,\ {\rm or} \ A_j = U^* \partial_j U \ {\rm \ and \ satisfies} \ A = -(dU^*) U, 
\end{equation}
because of the relation $U^* U = 1_k$. Its curvature is 
\begin{equation*}
F_{mn} = (\partial_m U^*) \partial_n U + U^* \partial_m \partial_n U - 
(\partial_n U^*) \partial_m U - U^* \partial_n \partial_m U
\end{equation*}
\begin{equation*}
- (\partial_m U^*) U U^* \partial_n U + (\partial_n U^*) U U^* \partial_m U = 
\end{equation*}
\begin{equation*}
(\partial_m U^*) \partial_n U - (\partial_n U^*) \partial_m U + (\partial_n U^*) U U^* \partial_m U - 
(\partial_m U^*) U U^* \partial_n U = 
\end{equation*}
\begin{equation*}
(\partial_m U^*) (1-UU^*) \partial_n U - (\partial_n U^*) (1-UU^*) \partial_m U,
\end{equation*}
or
\begin{equation*}
F_{mn} = (\partial_m U^*) (1-UU^*) \partial_n U - (\partial_n U^*) (1-UU^*) \partial_m U.
\end{equation*}
We note that $U^*(1-UU^*) = (1-UU^*)U = 0$ and therefore if denoted $Q = (1-UU^*)$ we would have  
\begin{equation} \label{Q}
Q^2 = (1-UU^*)(1-UU^*) = 1 - UU^* + (1-UU^*)UU^* = 1 - UU^* = Q.
\end{equation}
\par
We have: 
\begin{equation*}
(\partial_1 U^*) (1-UU^*) \partial_2 U = 
\frac{i}{\theta_{12}} [\hat{x}_2,U^*](1-UU^*)\frac{-i}{\theta_{12}} \ [\hat{x}_1,U] = 
\end{equation*}
\begin{equation*}
\frac{1}{\theta_{12}^2} (\hat{x}_2 U^* - U^* \hat{x}_2) (1-UU^*) (\hat{x}_1 U - U \hat{x}_1) = 
\frac{-1}{\theta_{12}^2} U^* \hat{x}_2 (1-UU^*) \hat{x}_1 U.
\end{equation*}
Analogously 
\begin{equation*}
(\partial_2 U^*) (1-UU^*) \partial_1 U = \frac{-1}{\theta_{12}^2} U^* \hat{x}_1 (1-UU^*) \hat{x}_2 U
\end{equation*}
and therefore 
\begin{equation} \label{F12}
F_{12} = \frac{1}{\theta_{12}^2} U^*( \hat{x}_1 (1-UU^*) \hat{x}_2 - \hat{x}_2 (1-UU^*) \hat{x}_1 )U.
\end{equation}
Further
\begin{equation} \label{F13}
F_{13} = \frac{1}{\theta_{12} \theta_{34}} U^*( \hat{x}_2 (1-UU^*) \hat{x}_4 - \hat{x}_4 (1-UU^*) \hat{x}_2 )U,
\end{equation}
\begin{equation} \label{F14}
F_{14} = \frac{1}{\theta_{12} \theta_{34}} U^*( \hat{x}_3 (1-UU^*) \hat{x}_2 - \hat{x}_2 (1-UU^*) \hat{x}_3 )U,
\end{equation}
\begin{equation} \label{F23}
F_{23} = \frac{1}{\theta_{12} \theta_{34}} U^*( \hat{x}_4 (1-UU^*) \hat{x}_1 - \hat{x}_1 (1-UU^*) \hat{x}_4 )U,
\end{equation}
\begin{equation} \label{F24}
F_{24} = \frac{1}{\theta_{12} \theta_{34}} U^*( \hat{x}_1 (1-UU^*) \hat{x}_3 - \hat{x}_3 (1-UU^*) \hat{x}_1 )U,
\end{equation}
\begin{equation} \label{F34}
F_{34} = \frac{1}{\theta_{34}^2} U^*( \hat{x}_3 (1-UU^*) \hat{x}_4 - \hat{x}_4 (1-UU^*) \hat{x}_3 )U.
\end{equation}

All projective modules over $\tilde{\mathcal{A}}$ are of the form \cite{S02, KS01} 
$\mathcal{E}_{kn} \equiv {\mathcal{F}^*}^k \oplus \tilde{\mathcal{A}}^n,$ where $n,k \in \mathbb{N}.$ 
This follows essentially from equations (\ref{K}). \\ 
Every connection on $\mathcal{E}_{kn}$ is of the form
\begin{equation*} 
\nabla_j = \left(\begin{array}{ll} i\theta_{jl}^{-1}\hat{x}_l & 0 \\ 0 & \partial_j \end{array}\right) +
\left(\begin{array}{ll} \ C_j & | B_j \rangle \\ \langle E_j | & \ \hat{D_j} \end{array}\right),
\end{equation*}
where $C_j \in M_{kk}(\mathbb{C}),\ | B_j \rangle, | E_j \rangle \in M_{kn}(\mathcal{F}),\ 
\hat{D_j} \in M_{nn}(\tilde{\mathcal{A}}),$ and $ \langle E_j | = | E_j \rangle^*.$ \\ 
We also note that the curvature of 
\begin{equation*}
\bar{\nabla}_j \equiv \left(\begin{array}{ll} i\theta_{jl}^{-1}\hat{x}_l & 0 \\ 0 & \partial_j \end{array}\right)
\end{equation*}
is $F_{jl} = -i\theta^{-1}_{jl}.$
\par
The operator trace of endomorphisms of such modules is given by \cite{KS01}
\begin{equation} \label{trace1}
\text{Tr}\left(\begin{array}{ll} \ C & | B \rangle \\ \langle E | & \ \hat{D} \end{array}\right) = 
(2\pi)^2(\theta_1 \theta_2) \underset{l=1}{\overset{k}{\sum}} C_{ll} + \underset{l=1}{\overset{n}{\sum}} 
\int D_{ll}(x),
\end{equation}
where $D_{ab}(x)$ is the corresponding function, obtained by the reverse of the Weyl transform.
\par
Every projective $\mathcal{A}$-module is of the form $P \mathcal{A}^N$, for some projection 
$P \in M_N(\mathcal{A})$. For unital algebras we have ${\rm End}(P \tilde{\mathcal{A}}^N) = 
P M_N(\tilde{\mathcal{A}}) P$ (for nonunital algebras this is not true). 
The operator trace on $M_N(\mathcal{A})$ is: \cite{TZS02}
\begin{equation*}
{\rm Tr}(\star) \equiv \int {\rm Tr}_N(\star) d^4x = 
\pi^2 | \theta_{12} \theta_{34} | {\rm Tr}_{\mathcal{F}}( {\rm Tr}_N(\star) ) =
\end{equation*}
\begin{equation} \label{trace2} 
 = \pi^2 | \theta_{12} \theta_{34} | \sum^{\infty}_{n_1, n_2 = 0} 
\langle n_1, n_2 | {\rm Tr}_N(\star) | n_1, n_2 \rangle.
\end{equation}

\section{Fields, Gauges, Instantons, ASD equations}

The Yang-Mills action is given by
\begin{equation} \label{YM}
S = \frac{1}{g^2} {\rm Tr} \frac{1}{4} (F_{ij}F_{ij}) d^4x,
\end{equation}
where {\rm Tr} is given by equation (\ref{trace1}) or equation (\ref{trace2}), respectively.
Minimizing the action yields in a standard way the ASD equations:
\begin{equation} \label{ASD}
F_{12} = - F_{34},\ \ F_{13} = F_{24},\ \ F_{14} = - F_{23}.
\end{equation}
\par
Let $\mathcal{A}$ be an algebra from Section \ref{sec2}. 
Usually gauge theories over submodules of $\tilde{\mathcal{A}}^N$ are considered as $U(N)$ gauge theories. 
For our purposes we define '$U(N)$ gauge group' as
\begin{equation*}
{\displaystyle U^N_0(\tilde{\mathcal{A}}) \equiv \{ {\rm exp}(i H_1) \cdots {\rm exp}(i H_n) \ | 
\ n \in \mathbb{N},\ H_j \in M_N(\tilde{\mathcal{A}}),\ H_j^* = H_j, \ \forall j \}.}
\end{equation*}
Since it is usually supposed that a connection takes values in the Lie algebra of the gauge group, here we will suppose 
that a connection takes values in the set of all antihermitean operators of $M_N(\tilde{\mathcal{A}}).$ Note that this 
choice is consistent with the special case of equation (\ref{AU}).
\par
It is clear that $\partial_j$ is a derivation $\forall j$ (actually it is an inner derivation). 
Taking a $U \in U^N_0(\tilde{\mathcal{A}})$ and applying ${\displaystyle \partial_j}$ to ${\displaystyle UU^* = 1}$ gives 
\begin{equation*}
{\displaystyle \partial_j(1) = \partial_j(UU^*) = \partial_j(U)U^* + U \partial_j(U^*)}.
\end{equation*}
Thus ${\displaystyle U \partial_j(U^*) = - \partial_j(U)U^* }.$ Multiplying by $U^*$ to the left gives 
\begin{equation*} 
{\displaystyle \partial_j(U^*) = - U^* \partial_j(U)U^*}.
\end{equation*}
Under the action of $U$ a connection transforms as
\begin{equation*} 
\nabla_j = \partial_j + A_j \mapsto U^* (\partial_j + A_j) U = \partial_j + U^* A_j U + U^* \partial_j U.
\end{equation*}
A standard computation yields the transformation
\begin{equation*} 
F_{mn} \mapsto U^* F_{mn} U. 
\end{equation*}
In some sense the unitary group $U^N_0(\mathcal{A})$ is the largest possible:
\begin{lemma} \label{U^N_0}
$U^N_0(\tilde{\mathcal{A}})$ is norm dense in the group $U^N(\tilde{\mathcal{K}})$ of all unitary operators from 
$M_N(\tilde{\mathcal{K}}).$  
\end{lemma}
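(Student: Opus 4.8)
The plan is to reduce the statement to a standard fact about $C^*$-algebras, namely that in a unital $C^*$-algebra the connected component of the identity in the unitary group is exactly the set of finite products of exponentials of self-adjoint elements. First I would recall that $\tilde{\mathcal{K}}$ is a unital $C^*$-algebra (the unitization of the compacts), and $M_N(\tilde{\mathcal{K}})$ is again a unital $C^*$-algebra. For any unital $C^*$-algebra $\mathcal B$, the subgroup generated by $\{\exp(iH) : H = H^* \in \mathcal B\}$ coincides with the identity component $U_0(\mathcal B)$ of the unitary group $U(\mathcal B)$: one inclusion is clear since $t \mapsto \exp(itH)$ is a norm-continuous path from $1$ to $\exp(iH)$, and the reverse inclusion follows because the identity component of a topological group is generated by any neighborhood of the identity, and every unitary within distance $2$ of $1$ is a single exponential $\exp(iH)$ via the continuous functional calculus (the spectrum misses $-1$, so a continuous branch of $\tfrac1i\log$ is available). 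Hence $U^N_0(\tilde{\mathcal A}) \subseteq U^N_0(\tilde{\mathcal K}) = U_0(M_N(\tilde{\mathcal K}))$, and the latter is closed in the former's ambient group only up to the question of which component structure $M_N(\tilde{\mathcal K})$ has.

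Next I would compute $\pi_0(U(M_N(\tilde{\mathcal K})))$. Using the split exact sequence $0 \to M_N(\mathcal K) \to M_N(\tilde{\mathcal K}) \to M_N(\mathbb C) \to 0$ together with the fact that $U(M_N(\mathcal K))$ and, more relevantly, $U(\widetilde{M_N(\mathcal K)}) $ in its relative topology are connected — this is the classical statement that the unitary group of the compacts-with-unit is connected, equivalently that every unitary in $1 + M_N(\mathcal K)$ is homotopic to $1$, which in turn rests on $K_1(\mathcal K) = 0$ as recorded in equation (\ref{K}) — I would argue that $U(M_N(\tilde{\mathcal K}))$ is itself connected, so that $U^N_0(\tilde{\mathcal K}) = U_0(M_N(\tilde{\mathcal K})) = U^N(\tilde{\mathcal K})$ as a group. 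At that point density is immediate, since the two groups are literally equal. If one prefers to stay strictly with $\mathcal A$ rather than $\mathcal K$, the same argument applies verbatim with $\tilde{\mathcal K}$ replaced by $\tilde{\mathcal A}$ once one invokes the Lemma from Section \ref{sec2} giving $K_1(\tilde{\mathcal A}) = K_1(\tilde{\mathcal K}) = 0$; but the cleanest route is to prove density of $U^N_0(\tilde{\mathcal A})$ in $U^N(\tilde{\mathcal K})$ directly by approximating a given unitary in $M_N(\tilde{\mathcal K})$ by unitaries in $M_N(\tilde{\mathcal A})$ and then exponential-expressing the latter.

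Concretely, given $V \in U^N(\tilde{\mathcal K})$ and $\varepsilon > 0$, write $V = \lambda\cdot 1 + K$ with $\lambda$ a scalar unitary in $M_N(\mathbb C)$ and $K \in M_N(\mathcal K)$; approximate $K$ in norm by an element $K' \in M_N(\tilde{\mathcal A})$ (possible since $\mathcal A$ is norm-dense in $\mathcal K$ by the Lemma of Section \ref{sec2}), so $V' := \lambda\cdot 1 + K'$ is within $\varepsilon$ of $V$ and lies in $M_N(\tilde{\mathcal A})$; for $\varepsilon$ small $V'$ is invertible and $W := V'(V'^*V')^{-1/2}$ is a genuine unitary in $M_N(\tilde{\mathcal A})$ within $O(\varepsilon)$ of $V$ (here one uses that $M_N(\tilde{\mathcal A})$ is closed under the holomorphic functional calculus, being a dense spectrally-invariant subalgebra of the $C^*$-algebra $M_N(\tilde{\mathcal K})$ — the $\Gamma^m_\rho$ and $\mathcal K^\infty$ cases need the spectral invariance noted in the references, which I would cite). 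Finally, since the scalar part $\lambda$ is a single exponential of a self-adjoint and $W\lambda^{-1} \in 1 + M_N(\mathcal A)$ is connected to $1$, $W$ is a finite product of exponentials of self-adjoints in $M_N(\tilde{\mathcal A})$, i.e.\ $W \in U^N_0(\tilde{\mathcal A})$. The main obstacle I anticipate is the spectral-invariance (holomorphic-functional-calculus closure) step for the non-$C^*$ algebras $\Gamma^m_\rho$ and $\mathcal K^\infty$: one must make sure that the polar/logarithm constructions keep us inside $\mathcal A$ and not merely inside $\mathcal K$, which is exactly the point where the structure of these particular algebras (rather than just norm-density) enters.
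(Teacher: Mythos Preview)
Your proposal and the paper's proof share the same architecture: both reduce to showing that $U^N(\tilde{\mathcal{K}})$ is connected (so that $U^N_0(\tilde{\mathcal{K}}) = U^N(\tilde{\mathcal{K}})$) via $K_1(\tilde{\mathcal{K}})=\{0\}$, and then invoke the norm density of $\mathcal{A}$ in $\mathcal{K}$ to pass from $U^N_0(\tilde{\mathcal{A}})$ to $U^N_0(\tilde{\mathcal{K}})$. The paper phrases the connectedness step by citing that $U^N_0(\tilde{\mathcal{K}})$ is the identity component and that the quotient $U^N(\tilde{\mathcal{K}})/U^N_0(\tilde{\mathcal{K}})$ computes $K_1$; your route through $\pi_0$ and the split extension is equivalent.

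The genuine difference is in the density step. You approximate a given unitary $V\in M_N(\tilde{\mathcal{K}})$ by an invertible $V'\in M_N(\tilde{\mathcal{A}})$, then take a polar part $W=V'(V'^{*}V')^{-1/2}$, and finally must argue that $W$ lies in $U^N_0(\tilde{\mathcal{A}})$. As you correctly note, both the polar decomposition and the concluding ``$W\lambda^{-1}$ is connected to $1$'' step require that $M_N(\tilde{\mathcal{A}})$ be closed under holomorphic functional calculus (spectral invariance), which is extra structure you have to import from the references for $\Gamma^m_\rho$ and $\mathcal{K}^\infty$. The paper sidesteps this entirely: since by definition $U^N_0(\tilde{\mathcal{A}})$ consists of products $\exp(iH_1)\cdots\exp(iH_m)$ with $H_j=H_j^*\in M_N(\tilde{\mathcal{A}})$, one simply takes an arbitrary $V=\exp(iH_1)\cdots\exp(iH_m)\in U^N_0(\tilde{\mathcal{K}})$, approximates each Hermitian $H_j$ by a Hermitian $H_j'\in M_N(\tilde{\mathcal{A}})$ (trivial from norm density and taking real parts), and uses norm continuity of the exponential. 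No functional calculus inside $\tilde{\mathcal{A}}$ is needed; one never has to ask whether the exponentials or square roots stay in $M_N(\tilde{\mathcal{A}})$. Your argument is not wrong, but the obstacle you flag is self-imposed: approximating the \emph{exponents} rather than the \emph{unitaries} dissolves it.
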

\begin{proof}
$U^N_0(\tilde{\mathcal{A}})$ is norm-dense in $U^N_0(\tilde{\mathcal{K}})$. From Proposition 2.1.6 of \cite{RLL00} 
follows that $U^N_0(\tilde{\mathcal{K}})$ is the connected component of the identity operator. 
But $M_N(\tilde{\mathcal{K}})$ is stable, because 
$M_N(\tilde{\mathcal{K}}) \otimes \tilde{\mathcal{K}} \cong 
M_N(\tilde{\mathcal{K}} \otimes \tilde{\mathcal{K}}) \cong M_N(\tilde{\mathcal{K}})$ (the $C^*$-norm on 
$\tilde{\mathcal{K}} \otimes \tilde{\mathcal{K}}$ is unique). Exercise 8.17 of \cite{RLL00} gives 
\begin{equation*}
U^N(\tilde{\mathcal{K}})/U^N_0(\tilde{\mathcal{K}}) \cong 
U(M_N(\tilde{\mathcal{K}}))/U_0(M_N(\tilde{\mathcal{K}})) \cong K_1(M_N(\tilde{\mathcal{K}})) \cong 
K_1(\tilde{\mathcal{K}}) \cong \{ 0 \}.
\end{equation*}
The last equality is from equation (\ref{K}).
\end{proof}
\par
A submodule of $\mathcal{A}^N$ is sent to an isomorphic submodule. If we consider a theory  on such a submodule as a 
Yang-Mills theory on $\mathcal{A}^N$, then the action (\ref{YM}) will be preserved when acting with elements of 
$U^N_0(\tilde{\mathcal{A}}).$ 
\par
It is clear that an ASD-connsection (i.e. an instanton) is mapped to an ASD-connsection. This follows from the fact 
that the transformed connection will still satisfy equations (\ref{ASD}). Moreover it is mapped to a connection of the 
same topological number. The topological number is defined as: \cite{TZS02}
\begin{equation*}
Q \equiv \frac{1}{16 \pi^2} {\rm Tr} (F_{mn}F_{mn}) = \frac{g^2}{4 \pi^2} S.
\end{equation*}
\par
In analogy we can consider antihermitean connections on the modules $\mathcal{E}_{kn}$: 
\begin{equation} \label{ahA}
\nabla_j = \left(\begin{array}{ll} \tilde{\partial_j} & 0 \\ 0 & \partial_j \end{array}\right) +
i \left(\begin{array}{ll} \ C_j & | B_j \rangle \\ \langle B_j | & \ \hat{D_j} \end{array}\right),
\end{equation}
where $C_j^* = C_j$, $\hat{D_j}^* = \hat{D_j}$, $\langle B_j | = | B_j \rangle^*$ and 
$\tilde{\partial_j} \equiv i\theta_{jl}^{-1}\hat{x}_l.$
In \cite{S01,KS01} a different gauge condition has been considered. 
Applying ASD equations (\ref{ASD}) to (\ref{ahA}) twelve equations are obtained for the twelve unknown operators of 
the $A_j$'s. 
\begin{lemma} \label{curvatureahA}
The curvature of (\ref{ahA}) is
\begin{equation*} 
(F_{mn})_{11} = i \tilde{\partial_m} C_n - i \tilde{\partial_n} C_m + C_n C_m - C_m C_n + | B_n \rangle \langle B_m | - 
| B_m \rangle \langle B_n |,
\end{equation*}
\begin{equation*} 
(F_{mn})_{12} = i \tilde{\partial_m} | B_n \rangle - i \tilde{\partial_n} | B_m \rangle + C_n | B_m \rangle - 
C_m | B_n \rangle + | B_n \rangle \hat{D_m} - | B_m \rangle \hat{D_n},
\end{equation*}
\begin{equation*} 
(F_{mn})_{21} = i \partial_m \langle B_n | - i \partial_n \langle B_m | + \langle B_n | C_m - \langle B_m | C_n + 
\hat{D_n} \langle B_m | - \hat{D_m} \langle B_n |,
\end{equation*}
\begin{equation*} 
(F_{mn})_{22} = i \partial_m \hat{D_n} - i \partial_n \hat{D_m} + \langle B_n | B_m \rangle - 
\langle B_m | B_n \rangle + \hat{D_n} \hat{D_m} - \hat{D_m} \hat{D_n}.
\end{equation*}
\end{lemma}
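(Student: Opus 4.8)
The plan is to compute $F_{mn}=\nabla_m\nabla_n-\nabla_n\nabla_m$ directly from (\ref{ahA}), treating $\nabla_j$ as a $2\times2$ block operator on $\mathcal{E}_{kn}={\mathcal{F}^*}^k\oplus\tilde{\mathcal{A}}^n$ and multiplying out. It is cleanest to first write $\nabla_j=\bar\nabla_j+iA_j$, where $\bar\nabla_j=\mathrm{diag}(\tilde{\partial_j},\partial_j)$ is the background connection and $A_j$ is the self-adjoint matrix of connection coefficients appearing in (\ref{ahA}); then bilinearity of the commutator gives
\begin{equation*}
F_{mn}=[\bar\nabla_m,\bar\nabla_n]+i\big([\bar\nabla_m,A_n]-[\bar\nabla_n,A_m]\big)-[A_m,A_n],
\end{equation*}
and I would evaluate the three pieces block by block. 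The first piece $[\bar\nabla_m,\bar\nabla_n]$ is diagonal: on the $\tilde{\mathcal{A}}^n$ summand it is $[\partial_m,\partial_n]=0$ (the derivations commute, as noted in Section \ref{sec2}), and on the ${\mathcal{F}^*}^k$ summand it is $[\tilde{\partial_m},\tilde{\partial_n}]$, a $c$-number matrix (the background curvature of $\bar\nabla$ recorded in the excerpt). The second piece $[\bar\nabla_m,A_n]$ is the background-covariant derivative of the endomorphism $A_n$; since $\bar\nabla_m$ is diagonal, acting on the ${\mathcal{F}^*}^k$ leg through $\tilde{\partial_m}$ and on the $\tilde{\mathcal{A}}^n$ leg through the derivation $\partial_m$, the four blocks produce exactly the ``mixed'' derivative symbols $\tilde{\partial_m}C_n$, $\tilde{\partial_m}|B_n\rangle-|B_n\rangle\partial_m$, $\partial_m\langle B_n|-\langle B_n|\tilde{\partial_m}$, $\partial_m\hat{D_n}$ appearing in the statement. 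The third piece $[A_m,A_n]$ is a purely algebraic $2\times2$ block commutator whose off-diagonal products $|B_m\rangle\langle B_n|$ and $\langle B_m|B_n\rangle$ feed the corresponding terms on the two diagonal blocks. Collecting the three contributions, using $i^2=-1$, and reordering, yields the four displayed formulas.

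The point that needs genuine care---and where I expect the bookkeeping to be heaviest---is the Leibniz rule carried by the derivation entry $\partial_j$. If one multiplies the block operators out term by term rather than going through the $\bar\nabla+iA$ splitting, then composing a row of $\nabla_m$ with $\nabla_n$ generates first-order pieces such as $\hat{D_m}\partial_n\xi$, $\hat{D_n}\partial_m\xi$ and $|B_m\rangle\partial_n\xi$ acting on a module element $\xi\in\tilde{\mathcal{A}}^n$; one has to verify that all such ``coefficient times $\partial$'' remainders are symmetric under $m\leftrightarrow n$ and hence drop out of $F_{mn}$, so that $F_{mn}$ really is an endomorphism and only the genuine covariant derivatives $\partial_m(\hat{D_n})$, $\partial_m\langle B_n|$, etc.\ survive. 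Routing everything through $[\bar\nabla_m,A_n]$, which is manifestly an endomorphism, builds this cancellation in automatically and is the organizing device I would use; everything else is linear algebra in the four blocks, and the lemma is stated before the ASD equations (\ref{ASD}) are imposed, so no use is made of them here.

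As a final consistency check I would confirm that the resulting $F_{mn}$ is anti-self-adjoint---equivalently $(F_{mn})_{11}^*=-(F_{mn})_{11}$, $(F_{mn})_{22}^*=-(F_{mn})_{22}$ and $(F_{mn})_{21}=-\big((F_{mn})_{12}\big)^*$---which is forced by $\tilde{\partial_j}$ being antihermitean, $\partial_j$ being a $*$-derivation and $A_j$ being self-adjoint, and which indeed is visible in the four formulas of the statement.
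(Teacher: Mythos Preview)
Your proposal is correct. The paper gives no proof whatsoever for this lemma---it simply closes with a $\Box$---so your direct block computation via the splitting $\nabla_j=\bar\nabla_j+iA_j$ and the three-term commutator expansion is exactly the routine calculation the reader is tacitly asked to supply.
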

\hfill $\Box$

When we work with projective modules which are not free, we must use a more general connection of the form $A=\Pi d\Pi,$
where $\Pi$ is a projection on a free module.

\section{ASD equations on free modules}

In this section, for the sake of simplicity, we will assume that $\theta_{12} = \theta_{34} = 1.$
\par
Plugging in equations (\ref{F12}) - (\ref{F34}) into equation (\ref{ASD}) we get 

\begin{equation} \label{F12=-F34}
U^*( \hat{x}_1 (1-UU^*) \hat{x}_2 - \hat{x}_2 (1-UU^*) \hat{x}_1 )U = 
U^*( \hat{x}_4 (1-UU^*) \hat{x}_3 - \hat{x}_3 (1-UU^*) \hat{x}_4 )U,
\end{equation}
\begin{equation} \label{F13=F24}
U^*( \hat{x}_2 (1-UU^*) \hat{x}_4 - \hat{x}_4 (1-UU^*) \hat{x}_2 )U = 
U^*( \hat{x}_1 (1-UU^*) \hat{x}_3 - \hat{x}_3 (1-UU^*) \hat{x}_1 )U,
\end{equation}
\begin{equation} \label{F14=-F23}
U^*( \hat{x}_3 (1-UU^*) \hat{x}_2 - \hat{x}_2 (1-UU^*) \hat{x}_3 )U = 
U^*( \hat{x}_1 (1-UU^*) \hat{x}_4 - \hat{x}_4 (1-UU^*) \hat{x}_1 )U.
\end{equation}

Setting $1-UU^* = Q$ and making use of equation (\ref{Q}), we see that equations 
(\ref{F12=-F34}) - (\ref{F14=-F23}) are equivalent to 
\begin{equation} \label{MF12=-F34}
U^* \left(\!\!\begin{array}{cccc} \sigma_1 x_1 & \sigma_2 x_2 & \sigma_3 x_3 & x_4 \end{array}\!\!\right) Q 
\left(\!\!\begin{array}{cccc}0 & 1 & 0 & 0 \\ 1 & 0 & 0 & 0 \\
0 & 0 & 0 & i \\ 0 & 0 & -i & 0 \end{array}\!\!\right)
Q \left(\!\!\begin{array}{c} \sigma_1 x_1 \\ \sigma_2 x_2 \\ \sigma_3 x_3 \\ x_4 \end{array}\!\!\right) U = 0,
\end{equation}
\begin{equation} \label{MF13=F24}
U^* \left(\!\!\begin{array}{cccc} \sigma_1 x_1 & \sigma_2 x_2 & \sigma_3 x_3 & x_4 \end{array}\!\!\right) Q 
\left(\!\!\begin{array}{cccc} 0 & 0 & 1 & 0 \\ 0 & 0 & 0 & i \\
1 & 0 & 0 & 0 \\ 0 & -i & 0 & 0 \end{array}\!\!\right)
Q \left(\!\!\begin{array}{c} \sigma_1 x_1 \\ \sigma_2 x_2 \\ \sigma_3 x_3 \\ x_4 \end{array}\!\!\right) U = 0,
\end{equation}
\begin{equation} \label{MF14=-F23}
U^* \left(\!\!\begin{array}{cccc} \sigma_1 x_1 & \sigma_2 x_2 & \sigma_3 x_3 & x_4 \end{array}\!\!\right) Q 
\left(\!\!\begin{array}{cccc}0 & 0 & 0 & -i \\ 0 & 0 & -1 & 0 \\
0 & -1 & 0 & 0 \\ i & 0 & 0 & 0 \end{array}\!\!\right)
Q \left(\!\!\begin{array}{c} \sigma_1 x_1 \\ \sigma_2 x_2 \\ \sigma_3 x_3 \\ x_4 \end{array}\!\!\right) U = 0,
\end{equation}
where for shortness we have written $U = U \otimes {\rm Id}_2,\ Q = Q \otimes {\rm Id}_2$ and $U^* = U^* \otimes {\rm Id}_2.$
Here $\sigma_1, \sigma_2, \sigma_3$ are the Pauli matrices, they satisfy $\sigma_1 \sigma_2 = i \sigma_3$, 
$\sigma_2 \sigma_3 = i \sigma_1$, $\sigma_3 \sigma_1 = i \sigma_2$, and anticommute: 
$\sigma_a \sigma_b = 2 \delta_{ab} {\rm Id}.$ Summing these three equations (\ref{MF12=-F34}), (\ref{MF13=F24}) 
and (\ref{MF14=-F23}) we get that (\ref{ASD}) is equivalent to the equation
\begin{equation*} 
U^* \left(\!\!\begin{array}{cccc} \sigma_1 x_1 & \sigma_2 x_2 & \sigma_3 x_3 & x_4 \end{array}\!\!\right) Q 
\left(\!\!\begin{array}{cccc}0 & 1 & 1 & -i \\ 1 & 0 & -1 & i \\
1 & -1 & 0 & i \\ i & -i & -i & 0 \end{array}\!\!\right)
Q \left(\!\!\begin{array}{c} \sigma_1 x_1 \\ \sigma_2 x_2 \\ \sigma_3 x_3 \\ x_4 \end{array}\!\!\right) U = 0.
\end{equation*}
If we denoted 
\begin{equation*} 
\Xi = \left(\!\!\begin{array}{cccc}0 & 1 & 1 & -i \\ 1 & 0 & -1 & i \\
1 & -1 & 0 & i \\ i & -i & -i & 0 \end{array}\!\!\right)
\text{ \ \ and \ \ }
\Omega = \left(
\begin{array}{cccc}
 \frac{1}{2} & -\frac{1}{2} & -\frac{1}{2} & \frac{i}{2} \\
 \frac{1}{\sqrt{2}} & 0 & 0 & -\frac{i}{\sqrt{2}} \\
 \frac{1}{\sqrt{6}} & \sqrt{\frac{2}{3}} & 0 & \frac{i}{\sqrt{6}} \\
 \frac{1}{2 \sqrt{3}} & -\frac{1}{2 \sqrt{3}} & \frac{\sqrt{3}}{2} & \frac{i}{2 \sqrt{3}} \\
\end{array}
\right)
\end{equation*}
we would have
\begin{equation*} 
\Xi \ = \ \Omega^* \ \left(\!\!\begin{array}{cccc}-3 & 0 & 0 & 0 \\ 0 & 1 & 0 & 0 \\
0 & 0 & 1 & 0 \\ 0 & 0 & 0 & 1 \end{array}\!\!\right)\ \Omega.
\end{equation*}
Thus we can put 
\begin{equation*} 
p_0 = \frac{\sigma_1 x_1}{2}-\frac{\sigma_2 x_2}{2}-\frac{\sigma_3 x_3}{2}+\frac{i x_4}{2}, \ 
p_1 = \frac{\sigma_1 x_1}{\sqrt{2}}-\frac{i x_4}{\sqrt{2}},
\end{equation*}
\begin{equation*} 
p_2 = \frac{\sigma_1 x_1}{\sqrt{6}}+\sqrt{\frac{2}{3}} \sigma_2 x_2 +\frac{i x_4}{\sqrt{6}}, \ 
p_3 = \frac{\sigma_1 x_1}{2 \sqrt{3}}-\frac{\sigma_2 x_2}{2 \sqrt{3}}+\frac{\sqrt{3} \sigma_3 x_3}{2}+\frac{i x_4}{2 \sqrt{3}}
\end{equation*}
and obtain 
\begin{proposition} 
The ASD equations (\ref{ASD}) are equivalent to the equation
\begin{equation} \label{p0p1p2p3}
U^* [ -3p_0^* Q p_0  +  p_1^* Q p_1  +  p_2^* Q p_2  +  p_3^* Q p_3 ] U = 0.
\end{equation}
\end{proposition}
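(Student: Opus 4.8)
The plan is to take as given the reduction already carried out above---that the ASD equations \ref{ASD} are equivalent to the single operator equation $U^{*}\,\vec{x}^{\,*}\, Q\, \Xi\, Q\, \vec{x}\, U = 0$, where I write $\vec{x}$ for the column $(\sigma_1 x_1,\sigma_2 x_2,\sigma_3 x_3,x_4)^{T}$, $\vec{x}^{\,*}$ for the corresponding row, $Q=1-UU^{*}$, and $\Xi$ is the Hermitian matrix displayed before the statement---and then simply to diagonalize $\Xi$ and rename the entries of $\Omega\vec{x}$. The one point in that reduction I would want to be sure of is why passing from the three equations \ref{MF12=-F34}--\ref{MF14=-F23} to their sum loses no information; a short computation from \ref{F12}--\ref{F34} using $\sigma_a\sigma_b+\sigma_b\sigma_a=2\delta_{ab}{\rm Id}$ and $Q^{2}=Q$ (equation \ref{Q}) shows that the left-hand sides of \ref{MF12=-F34}, \ref{MF13=F24}, \ref{MF14=-F23} equal $i\sigma_3\otimes(F_{12}+F_{34})$, $\pm i\sigma_2\otimes(F_{13}-F_{24})$ and $\pm i\sigma_1\otimes(F_{14}+F_{23})$ respectively, all with vanishing ${\rm Id}_2$-component, so that by linear independence of $\{{\rm Id}_2,\sigma_1,\sigma_2,\sigma_3\}$ the sum vanishes iff all three ASD equations do.

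Next I would diagonalize $\Xi$. It is Hermitian with ${\rm tr}\,\Xi=0$, and one checks directly that $v=(\tfrac12,-\tfrac12,-\tfrac12,-\tfrac i2)^{T}$ satisfies $\Xi v=-3v$; by Hermiticity its orthogonal complement $v^{\perp}$ is the eigenspace for the remaining eigenvalue, which is forced to be $1$ with multiplicity $3$ by the trace condition. Taking $\Omega$ to be the unitary whose first row is $v^{*}$ and whose other three rows form an orthonormal basis of $v^{\perp}$---the explicit $\Omega$ above being one such choice, the output of a Gram--Schmidt process---gives $\Xi=\Omega^{*}\,{\rm diag}(-3,1,1,1)\,\Omega$. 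This is essentially the only step with computational content, and it is where I expect whatever ``obstacle'' there is to sit: producing a convenient orthonormal eigenframe of the three-dimensional eigenspace and verifying the matrix identity. It is not a conceptual difficulty.

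Finally I would substitute and regroup. Since $\Omega$ and $D:={\rm diag}(-3,1,1,1)$ act on the four-component index while $Q$ acts on the module, and since the entries of $\Omega$ and $D$ are scalars, they commute past $Q$; combining this with $Q^{2}=Q$ gives
\begin{align*}
\vec{x}^{\,*}\, Q\, \Xi\, Q\, \vec{x}
&= \vec{x}^{\,*}\, Q\, \Omega^{*} D\, \Omega\, Q\, \vec{x}
 = \sum_{c=0}^{3} D_{cc}\, p_{c}^{*}\, Q\, p_{c} \\
&= -3\,p_0^{*} Q p_0 + p_1^{*} Q p_1 + p_2^{*} Q p_2 + p_3^{*} Q p_3,
\end{align*}
where $p_{c}:=(\Omega\vec{x})_{c}=\sum_{a}\Omega_{ca}\vec{x}_{a}$ and hence $p_{c}^{*}=\sum_{a}\overline{\Omega_{ca}}\,\vec{x}_{a}$ (each $\vec{x}_{a}$ being self-adjoint). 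Reading off the rows of $\Omega$, the operators $p_0,p_1,p_2,p_3$ are exactly those written just before the Proposition. Sandwiching the last display between $U^{*}$ and $U$ and using the equivalence recalled in the first paragraph yields \ref{p0p1p2p3}. The only thing needing care in writing this out is the bookkeeping of adjoints and of the positions of the scalar matrices relative to $Q$ and to the noncommuting $x_j$.
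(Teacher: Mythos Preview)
Your proposal is correct and follows exactly the paper's route: rewrite the three ASD equations in the Pauli--matrix form \eqref{MF12=-F34}--\eqref{MF14=-F23}, sum them to obtain the single $\Xi$--equation, diagonalize $\Xi=\Omega^{*}\,{\rm diag}(-3,1,1,1)\,\Omega$, and relabel the components of $\Omega\vec{x}$ as $p_0,\dots,p_3$. You even supply the point the paper leaves implicit---that the three matrix equations live in the $\sigma_3$, $\sigma_2$, $\sigma_1$ slots respectively, so their sum vanishes iff each does---the only small imprecision being that ${\rm tr}\,\Xi=0$ alone does not force the three remaining eigenvalues each to equal $1$; one needs in addition that $\Xi-I$ has rank one (all its rows are scalar multiples of $(1,-1,-1,i)$), or else just verifies the displayed identity $\Xi=\Omega^{*}D\Omega$ directly.
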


Note that (\ref{p0p1p2p3}) is a Schr\"{o}dinger-like equation, due to the fact that the presence of $x_i$'s in the 
positive operators $U^* p_0^* Q p_0 U$, $U^* p_1^* Q p_1 U$, $U^* p_2^* Q p_2 U$, and $U^* p_3^* Q p_3 U$ corresponds 
to taking derivatives.

\section{ADHM construction and topological charge}

We now recall the ADHM-construction (ADHM stands for Atiyah, Drinfeld, Hitchin and Manin). In the conventional case 
it has been introduced in \cite{ADHM78}. The analogue in the noncommutative case has been done in \cite{NS98}.
\par
Consider \cite{TZS02} complex vector spaces $V$ and $W$, 
${\rm dim}_{\mathbb{C}} V = k,\ {\rm dim}_{\mathbb{C}} W = n$. 
Let $B_1, B_2 \in M_{kk}(\mathbb{C}),\ I \in M_{kn}(\mathbb{C}),\ J \in M_{nk}(\mathbb{C})$ be matrices, satisfying 
\begin{equation} \label{ADHM1}
[B_1,B_1^*] + [B_2, B_2^*] + II^* - J^*J = 2(\theta_{12} + \theta_{34}) {\rm Id}_n
\end{equation}
and
\begin{equation} \label{ADHM2}
[B_1,B_2] + IJ = 0.
\end{equation}
Next consider the operator
\begin{equation*} 
\Delta^* = \left(\begin{array}{lll} I & B_2 + \hat{z}_2 & B_1 + \hat{z}_1 \\
J^* & -B_1^* - \hat{z}_1^* & B_2^* + \hat{z}_2^* \end{array}\right),
\end{equation*}
where $\hat{z}_1 = \hat{x_2} + i \hat{x_1} \  {\rm  and} \  \hat{z}_2 = \hat{x_4} + i \hat{x_3}.$ Conditions 
(\ref{ADHM1}) and (\ref{ADHM2}) are equivalent to 
\begin{equation*} 
\Delta^* \Delta = \left(\begin{array}{ll} \Gamma & 0  \\ 0 & \Gamma \end{array}\right),
\end{equation*}
where $\Gamma \in \mathbb{A}$ is some operator, where 
$\mathbb{A} \equiv {\rm Alg}(1, \hat{x_1}, \hat{x_2}, \hat{x_3}, \hat{x_4}).$ 
It turns out that \cite{N00, S01} due to equations (\ref{ADHM1}) and (\ref{ADHM2}) $\Delta^* \Delta$ has no kernel. 
Therefore it is formally invertible. 
If we denoted by $\Upsilon$ the right $\mathbb{A}$-module generated by $(\Delta^* \Delta)^{-1/2} \Delta^*$, i.e. 
$\Upsilon = (\Delta^{-1} \Delta)^{-1/2} \mathbb{A}$, then it would turn out that 
$\Pi' = \Delta (\Delta^* \Delta)^{-1} \Delta^*$ would be the projection onto $\Upsilon$. 
Therefore $\Pi = 1 - \Pi'$ would be the projection onto the orthogonal complement $\Psi$ of $\Upsilon$ in the module 
$(V \oplus V \oplus W) \otimes \mathbb{A}$ with respect to the inner product 
$\langle (a_1, \dots, a_{2k + n}),\ (b_1, \dots, b_{2k + n}) \rangle = a_1^* b_1 + \dots + a_{2k + n}^* b_{2k + n}.$
\par
From now on we assume that $\theta_{12} = \theta_{34} = 1$.
\par
The solutions of $\Delta^* U = 0$ can be combined into a rank $n$ free module part $V$, consisting of $n$ columns, 
and a part $K$, consisting of $k$ vectors from $\mathcal{K}$. The zero modes of 
\begin{equation} \label{zero}
\left(\begin{array}{ll}  B_2 + \hat{z}_2 & B_1 + \hat{z}_1 \\
 -B_1^* - \hat{z}_1^* & B_2^* + \hat{z}_2^* \end{array}\right)
\end{equation}
comprise of a $k$ dimensional space, because (\ref{zero}) is in fact an elliptic pseudodifferential operator of 
order $k$, and therefore \cite{S01} has index $k$. \\
If $|v_1 \rangle, \dots, |v_k \rangle$ is an orthonormal family of zero modes of (\ref{zero}) we can set 
\begin{equation*}
K_{1,(n+1)} = |v_1 \rangle \langle 0,0| + \dots |v_k \rangle \langle 0,k-1| 
\end{equation*}
and all the other entries of $K \in M_{(n+2k),(n+2k)}$ equal to zero. Then we can write $U = V + K$ and have:
\begin{equation*}
U^* U = 1_n \oplus \mathbb{I}_k,\ \ UU^* = \Pi,\ \ V^*V = 1_n,\ \ K^*K = \mathbb{I}_k,
\end{equation*}
where $\mathbb{I}_k$ is the projection of $M_{(n+2k),(n+2k)}$ with all entries equal to zero, except the 
$(n+1) \times (n+1)$-st entry, which is $|0,0 \rangle \langle 0,0| + \dots |0,k-1 \rangle \langle 0,k-1|.$
\par
We now want to find the connection $\nabla = d + A$ of the module $\Psi$. We have \cite{A79} 
\begin{equation*}
\nabla(U \psi) = \Pi d(U \psi) = UU^* d(U \psi) = U[U^* (dU) \psi + U^* U d \psi] = 
\end{equation*}
\begin{equation*}
U[U^* (dU) \psi + (1_n \oplus \mathbb{I}_k)(d \psi)].
\end{equation*}
Therefore $A = U^* dU.$
\par
The curvature is 
\begin{equation*}
F_{pq} = \partial_p U^* \partial_q U - \partial_q U^* \partial_p U + 
(U^* \partial_p U)(U^* \partial_q U) - (U^* \partial_q U)(U^* \partial_p U) = 
\end{equation*}
\begin{equation*}
\partial_p U^* (1-UU^*) \partial_q U - \partial_q U^* (1-UU^*) \partial_p U + 
(\partial_p \mathbb{I}_k) U^* \partial_q U - (\partial_q \mathbb{I}_k) U^* \partial_p U.
\end{equation*}
\par
The computation of the topological index of the ADHM connection in \cite{TZS02} uses the assumption that 
$U^*U = 1_n$, which is incorrect. Thus in the curvature the term 
$(\partial_p \mathbb{I}_k) U^* \partial_q U - (\partial_q \mathbb{I}_k) U^* \partial_p U$ has been omitted. The 
computation uses the Corrigan's identity 
\begin{equation*}
{\rm Tr}_{2k+n}(F_{pq} F_{pq}) = \frac{1}{2} \partial_q \partial_q {\rm Tr}[\sigma_p b^* 
(2 - \Delta \Gamma^{-1} \Delta^*) b \bar{\sigma_p} \Gamma^{-1}],
\end{equation*}
where
\begin{equation*}
b = \left(\begin{array}{ll}  0 & 0  \\ 1 & 0 \\ 0 & 1 \end{array}\right).
\end{equation*}
Its derivation \cite{DKM96} does not depend on the assumption $U^*U = 1_n$, and therefore a correction is due only to 
the analysis of the missing term in the curvature. In \cite{S02} the analysis of the topological index does not use 
the Corrigan's identiry and is done more carefully.
\par
Further we obtain
\begin{equation*}
F_{pq} F_{pq} = [\partial_p U^* (1 - UU^*) \partial_q U - \partial_q U^* (1 - UU^*) \partial_p U]^2 + 
\end{equation*}
\begin{equation*}
[\partial_pU^* (1- UU^*) \partial_q U - \partial_q U^* (1- UU^*) \partial_p U] 
[(\partial_p \mathbb{I}_k) U^* \partial_q U - (\partial_q \mathbb{I}_k) U^* \partial_p U] + 
\end{equation*}
\begin{equation*}
[(\partial_p \mathbb{I}_k) U^* \partial_q U - (\partial_q \mathbb{I}_k) U^* \partial_p U]
[\partial_pU^* (1- UU^*) \partial_q U - \partial_q U^* (1- UU^*) \partial_p U] +
\end{equation*}
\begin{equation} \label{FF}
[(\partial_p \mathbb{I}_k) U^* \partial_q U - (\partial_q \mathbb{I}_k) U^* \partial_p U]^2.
\end{equation} 
The first term is the one that contributes to the index. We will show that the other terms give zero in 
${\rm Tr}_{\mathcal{H}} ({\rm Tr}_{2k+n}( \cdot )).$ 
\par
For those terms we need the following formulae:
\begin{equation*}
\partial_1(\mathbb{I}_k) = \partial_2(\mathbb{I}_k) = 0,
\end{equation*}
\begin{equation*}
\partial_3(\mathbb{I}_k) = \frac{k}{2i} (|0,k \rangle \langle 0, k-1| + |0, k-1 \rangle \langle 0, k|,
\end{equation*}
\begin{equation*}
\partial_4(\mathbb{I}_k) = \frac{k}{2} (|0,k \rangle \langle 0, k-1| - |0, k-1 \rangle \langle 0, k|,
\end{equation*}
\begin{equation*}
\partial_3 K = \sum_{j=0}^{k-1} \ \{ \ \frac{z_2-z_2^*}{2i} |v_j \rangle \langle j| - 
\frac{j}{2i} |v_j \rangle \langle j-1| + \frac{j+1}{2i} |v_j \rangle \langle j+1 | \ \},
\end{equation*}
\begin{equation*}
\partial_4 K = \sum_{j=0}^{k-1} \ \{ \ \frac{z_2+z_2^*}{2} |v_j \rangle \langle j| - 
\frac{j}{2} |v_j \rangle \langle j-1| - \frac{j+1}{2} |v_j \rangle \langle j+1 | \ \},
\end{equation*}
\par
The eventual nonzero contributions can arize only for $p=3,\ q=4$. The case $p=4,\ q=3$ is symmetric.
The second and the third terms of (\ref{FF}) are equal by the trace property. Clearly the only nonzero element 
of $\partial_3 U (\partial_3 \mathbb{I}_k) U^*$ is 
${\displaystyle \frac{k}{2i} \frac{k}{2i} |0,k \rangle \langle v_k |}$ and in 
$\partial_4 U (\partial_4 \mathbb{I}_k) U^*$ it is 
${\displaystyle \frac{-k}{2} \frac{k}{2i} |0,k \rangle \langle v_k |}$. Therefore 
$(1 - UU^*) \partial_3 U (\partial_3 \mathbb{I}_k) U^* = 0$ and the second and third terms of 
(\ref{FF}) are zero. The last term of (\ref{FF}) is (note that we can write $K$ instead of 
$U$ in this)
\begin{equation*}
[(\partial_3 \mathbb{I}_k) U^* \partial_4 U - (\partial_4 \mathbb{I}_k) U^* \partial_3 U]^2 = 
\end{equation*} 
\begin{equation*}
(\partial_3 \mathbb{I}_k) K^* \partial_4 K (\partial_3 \mathbb{I}_k) K^* \partial_4 K - 
(\partial_3 \mathbb{I}_k) K^* \partial_4 K (\partial_4 \mathbb{I}_k) K^* \partial_3 K - 
\end{equation*} 
\begin{equation*}
(\partial_4 \mathbb{I}_k) K^* \partial_3 K (\partial_3 \mathbb{I}_k) K^* \partial_4 K + 
(\partial_4 \mathbb{I}_k) K^* \partial_3 K (\partial_4 \mathbb{I}_k) K^* \partial_3 K.
\end{equation*} 
The second and the third term have the same contribution by the trace property. 
From the above observations it is easy to see that the nonzero contibution to the trace in the first term is 
\begin{equation*}
\frac{k}{2i} |0,k \rangle \langle 0, k-1| |0, k-1 \rangle \langle v_k | \frac{-k}{2} | v_k \rangle \langle 0, k | 
\frac{k}{2i} |0, k \rangle \langle 0, k-1 | | 0,k-1 \rangle . 
\end{equation*} 
\begin{equation*}
\langle v_k | \frac{-k}{2} | v_k \rangle \langle 0, k | = \frac{-k^4}{16} |0,k \rangle \langle 0,k |.
\end{equation*} 
In completely analogous way we obtain that the only nonzero contribution to the trace in the second term is  
\begin{equation*}
\frac{k^4}{16} |0,k \rangle \langle 0,k |,
\end{equation*}
and in the fourth term is
\begin{equation*}
\frac{-k^4}{16} |0,k \rangle \langle 0,k |.
\end{equation*}
Therefore 
\begin{equation*}
{\rm Tr}_{\mathcal{H}} ({\rm Tr}_{2k+n}( [(\partial_p \mathbb{I}_k) U^* \partial_q U - 
(\partial_q \mathbb{I}_k) U^* \partial_p U]^2)) = -\frac{k^4}{16} + \frac{k^4}{16} + \frac{k^4}{16} - 
\frac{k^4}{16} = 0.
\end{equation*} 
Thus we conclude that the topological index of an ADHM connection is equal to $k$, as proposed.

\end{document}